\documentclass[11pt]{amsart}

\usepackage{amsmath}

\usepackage{amssymb}
\usepackage{amsthm}
\usepackage{latexsym}
\usepackage{pstricks}
%


\date{}


\def\CC{{\NZQ C}}


%
%

%
%

%

\def\opn#1#2{\def#1{\operatorname{#2}}} 
%
\opn\chara{char} \opn\length{\ell} \opn\pd{pd} \opn\rk{rk}
\opn\projdim{proj\,dim} \opn\injdim{inj\,dim} \opn\rank{rank}
\opn\depth{depth} \opn\codepth{codepth} \opn\grade{grade}
\opn\height{height} \opn\embdim{emb\,dim} \opn\codim{codim}

\opn\Tr{Tr} \opn\bigrank{big\,rank}
\opn\superheight{superheight}\opn\lcm{lcm}
\opn\trdeg{tr\,deg}%
\opn\reg{reg} \opn\lreg{lreg} \opn\skel{skel} \opn\Gr{Gr}
\opn\dim{dim} \opn\arithdeg{arithdeg}
\opn\dev{dev}

\opn\diam{diam}

%
\opn\div{div} \opn\Div{Div} \opn\cl{cl} \opn\Cl{Cl}
%
%
\opn\Spec{Spec} \opn\Supp{Supp} \opn\supp{supp} \opn\Sing{Sing}
\opn\Ass{Ass}
%
%
\opn\Ann{Ann} \opn\Rad{Rad} \opn\Soc{Soc}
%
%
\opn\Sym{Sym} \opn\Ker{Ker} \opn\Coker{Coker} \opn\Im{Im}
\opn\Hom{Hom} \opn\Tor{Tor} \opn\Ext{Ext} \opn\End{End}
\opn\Aut{Aut} \opn\id{id} \opn\ini{in} \opn\tr{tr}

\opn\nat{nat}\opn\it{it}
\opn\pff{proof}
\opn\Pf{proof} \opn\GL{GL} \opn\SL{SL} \opn\mod{mod} \opn\ord{ord}
%
%
\opn\aff{aff} \opn\con{conv} \opn\relint{relint} \opn\st{st}
\opn\lk{lk} \opn\cn{cn} \opn\core{core} \opn\vol{vol}
\opn\link{link} \opn\star{star} \opn\skel{skel}
\opn\cone{cone} \opn\star{star} \opn\skel{skel}

\opn\gr{gr}

%
%

\def\pot#1#2{#1[\kern-0.28ex[#2]\kern-0.28ex]}

%
%
\opn\dirlim{\underrightarrow{\lim}}
\opn\inivlim{\underleftarrow{\lim}}
%
%
%

\let\ol=\overline
\let\wt=\widetilde

%
%

\def\Implies{\ifmmode\Longrightarrow \else
     \unskip${}\Longrightarrow{}$\ignorespaces\fi}
\def\implies{\ifmmode\Rightarrow \else
     \unskip${}\Rightarrow{}$\ignorespaces\fi}
\def\iff{\ifmmode\Longleftrightarrow \else
     \unskip${}\Longleftrightarrow{}$\ignorespaces\fi}

\let\:=\colon
\newtheorem{Theorem}{Theorem}[section]
\newtheorem{Lemma}[Theorem]{Lemma}

\newtheorem{Proposition}[Theorem]{Proposition}
\newtheorem{Remark}[Theorem]{Remark}

\newtheorem{Example}[Theorem]{Example}

\newtheorem{Algorithm}[Theorem]{Algorithm}
%
%
\let\epsilon\varepsilon
\let\phi=\varphi
\let\kappa=\varkappa
%
%
\textwidth=15cm \textheight=22cm \topmargin=0.5cm
\oddsidemargin=0.5cm \evensidemargin=0.5cm \pagestyle{plain}
%
%
\def\qed{\ifhmode\textqed\fi
   \ifmmode\ifinner\quad\qedsymbol\else\dispqed\fi\fi}
\def\textqed{\unskip\nobreak\penalty50
    \hskip2em\hbox{}\nobreak\hfil\qedsymbol
    \parfillskip=0pt \finalhyphendemerits=0}
\def\dispqed{\rlap{\qquad\qedsymbol}}

%
%
%
\opn\Gin{Gin}

\def\GG{{\mathcal G}}

\def\CC{{\mathcal C}}

\def\FF{{\mathcal F}}
\def\MM{{\mathcal M}}

\opn\inii{in} \opn\inim{inm} \opn\rate{rate}

\numberwithin{equation}{section}

\textwidth=12.5cm
\textheight=19.5cm

\begin{document}
\title{Cohen-Macaulay binomial edge ideals of small deviation}

\author[Giancarlo Rinaldo]{Giancarlo Rinaldo}
\address[Giancarlo Rinaldo]{Dipartimento di Matematica, 
Universita' di Messina, 
Viale Ferdinando Stagno d'Alcontres, 31
98166 Messina, Italy.}
\email{giancarlo.rinaldo@tiscali.it}

\subjclass[2000]{Primary 13F55, Secondary 13H10}
\date{\today}
\keywords{binomial ideal, unmixed, Cohen--Macaulay}

\begin{abstract}
We classify all binomial edge ideals that are complete intersection and  Cohen-Macaulay almost complete intersection. We also describe an algorithm and provide an implementation to compute primary decomposition of binomial edge ideals.
\end{abstract}

\maketitle

\section*{Introduction}


In 2010, binomial edge ideals were introduced in \cite{HH} and appeared independently also in \cite{MO}. Let $S = K[x_1,\ldots, x_n, y_1,\ldots, y_n]$ be the polynomial ring in $2n$ variables with coefficients in a field $K$. Let $G$ be a graph on vertex set $[n]$. For each edge $\{i,j\}$ of $G$ with $i < j$, we associate a binomial $f_{ij} = x_iy_j - x_jy_i$. The ideal $J_G$ of $S$ generated by $f_{ij} = x_iy_j - x_jy_i$ such that $i<j$ , is called \textit{the binomial edge ideal} of $G$. Any ideal generated  by a set of $2$-minors  of a $2\times n$-matrix of indeterminates may be viewed as the binomial edge ideal of a graph. 


Algebraic properties of binomial edge ideals in terms of properties of the underlying graph were studied in \cite{HH}, \cite{CR}, \cite{HEH} and \cite{RR}. In \cite{HEH} and \cite{RR} the authors considered the Cohen-Macaulay property of these graphs.  However, the classification of Cohen-Macaulay binomial edge ideals in terms of the underlying graphs is still widely open and, as in the case of monomial edge ideals introduced in \cite{Vi2}, it seems rather hopeless to give a full classification.

In this paper we consider Cohen-Macaulay and unmixed binomial edge ideals $J_G$ with small deviation, namely the difference between the minimum number of the generators and the height of $J_G$ is less than or equal to $2$. 

Section \ref{sec:pre} contains some preliminaries and notions that we use in the paper.
In the beginning of Section \ref{sec:main} we give a complete classification of the complete intersection binomial edge ideals (Theorem \ref{the:dev0}), that is the case of deviation $0$. This results is a consequence of Corollary 1.2 of \cite{HEH}. We also observe that in general the almost complete intersection, namely deviation $1$, binomial edge ideals are not unmixed. A nice example is the claw graph (see Example  \ref{exa:claw}). In Theorem \ref{the:dev1} we give a complete classification of Cohen-Macaulay binomial edge ideals that are almost complete intersection and we show that this set coincides with the set of  unmixed binomial edge ideals that are almost complete intersection. 

In Section \ref{sec:algo} we describe an algorithm to compute the primary decomposition of $J_G$ and provide an implementation  in CoCoA (see \cite{Co}) that is freely downloadable (see \cite{R}). Thanks to this we computed Examples \ref{exa:dev2} and \ref{exa:dev2bis} that are unmixed binomial edge  ideals of deviation $2$ that are not Cohen-Macaulay.

\section{Preliminaries}\label{sec:pre}
In this section we recall some concepts and notations on graphs and on simplicial
complexes that we will use in the article.

Let $G$ be a simple graph with vertex set $V(G)$ and the edge set $E(G)$. A subset $C$ of $V(G)$ is called a \textit{clique} of $G$ if for all $i$ and $j$ belonging to $C$ with $i \neq j$ one has $\{i, j\} \in E(G)$. A vertex of a graph is called cutpoint if the removal of the vertex increases the number of connected components. A vertex $v$ is a cut point of a graph $G$ if and only if there exist $u,w\in V(G)$ such that $v$ is in every path connecting $u$ and $w$ (see Theorem 3.1 of \cite{Ha}).
A subgraph $H$ of $G$ \textit{spans} $G$ if $V(H)=V(G)$. In a connected graph $G$ a \textit{chord} of a tree $T$ that spans $G$ is an edge of $G$ not in $T$. The number of chords of any spanning tree of a connected graph $G$, namely $m(G)$, is called the cycle rank of $G$ and is $m(G)=|E(G)|-|V(G)|+1$ (see Corollary 4.5(a) of \cite{Ha}). If $G$ has $c$ components than $m(G)=|E(G)|-|V(G)|+c$ (see Corollary 4.5(b) of \cite{Ha}).

Let $v\not\in V(G)$. The \textit{cone} of $v$ on $G$, namely $\cone(v,G)$, is the graph with vertices $V(G)\cup \{v\}$ and edges $E(G)\cup \{\{u,v\}:u\in V(G)\}$.

Let $G_1$ and $G_2$ be graphs. We set $G=G_1\cup G_2$  (resp. $G=G_1\sqcup G_2$ where $\sqcup$ is disjoint union) where $G$ is the graph with $V(G)=V(G_1)\cup V(G_2)$  (resp. $V(G)=V(G_1)\sqcup V(G_2)$) and $E(G)=E(G_1)\cup E(G_2)$ (resp. $E(G)=E(G_1)\sqcup E(G_2)$).

Set $V = \{x_1, \ldots, x_n\}$. A \textit{simplicial complex}
$\Delta$ on the vertex set $V$ is a collection of subsets of $V$
such that
\begin{enumerate}
\item[(i)] $\{x_i\} \in \Delta$  for all $x_i \in V$;
\item[(ii)] $F \in \Delta$ and $G\subseteq F$ imply $G \in \Delta$.
\end{enumerate}
An element $F \in \Delta$ is called a \textit{face} of $\Delta$. A maximal face of $\Delta$  with respect to inclusion is called a \textit{facet} of $\Delta$.
A vertex $i$ of $\Delta$ is called a free vertex of $\Delta$ if $i$ belongs to exactly one facet.

If $\Delta$ is a simplicial complex  with facets $F_1, \ldots, F_q$, we call $\{F_1, \ldots, F_q\}$ the facet set of $\Delta$ and we denote it by $\FF(\Delta)$.

The \textit{clique complex} $\Delta(G)$ of $G$ is the simplicial complex whose faces are the cliques of $G$. Hence a vertex $v$ of a graph $G$ is called \textit{free vertex} if it belongs to only one clique of $\Delta(G)$.

We need notations and results  from \cite{HH} (section 3) that we recall for the sake of completeness.

Let $T\subseteq [n]$, and let $\ol{T}=[n]\setminus T$. Let $G_1,\ldots,G_{c(T)}$ be the connected components of the induced subgraph on $\ol{T}$, namely $G_{\ol{T}}$. For each $G_i$, denote by $\wt{G}_i$ the complete graph on the vertex set $V(G_i)$. We set 
\begin{equation}\label{eq:prime}
P_T(G)=(\bigcup_{i\in T}\{x_i,y_i\},J_{\wt{G}_1},\ldots,J_{\wt{G}_{c(T)}} ), 
\end{equation}
$P_T(G)$ is a prime ideal. Then $J_G=\bigcap_{T\subset [n]}P_T(G)$. If there is no confusion possible, we write simply $P_T$ instead of $P_T(G).$ Moreover, $\height P_T=n+|T|-c(T)$ (see \cite[Lemma 3.1]{HH}).
 We denote by $\MM(G)$ the set of minimal prime ideals of $J_G$.

If each $i\in T$ is a cut point of the graph $G_{\ol{T}\cup \{i\}}$, then we say that $T$ has {\em cutpoint property for $G$.} We denote by $\mathcal{C}(G)$ the set of all $T\subset V(G)$ such that $T$ has cutpoint property for $G$.

\begin{Lemma}\label{lem:cutpoint}\cite{HH}
 $P_T(G)\in \MM(G)$ if and only if $T\in\mathcal{C}(G)$.
\end{Lemma}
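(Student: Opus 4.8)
The plan is to reduce the statement to a question about inclusions among the primes $P_{T'}$. Since $J_G=\bigcap_{T'\subseteq[n]}P_{T'}$ is a \emph{finite} intersection of prime ideals, its minimal primes $\MM(G)$ are exactly the inclusion-minimal members of the family $\{P_{T'}:T'\subseteq[n]\}$; so proving the lemma amounts to showing that $P_T$ is minimal with respect to inclusion if and only if $T\in\mathcal{C}(G)$. First I would record the combinatorial reformulation of the cutpoint property: a vertex $i\in T$ is a cutpoint of $G_{\ol{T}\cup\{i\}}$ exactly when removing $i$ strictly increases the number of components, i.e.\ exactly when $i$ is adjacent to (at least) two distinct connected components of $G_{\ol{T}}$. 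I would also note that $x_j,y_j\in P_T$ if and only if $j\in T$: indeed $S/P_T$ is a domain in which the images of $x_j,y_j$ for $j\in\ol{T}$ are nonzero, since they are coordinates in the ring attached to their component.

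For the direction ``$P_T$ minimal $\Rightarrow T\in\mathcal{C}(G)$'' I would argue contrapositively. Suppose $T\notin\mathcal{C}(G)$, so some $i\in T$ is adjacent to at most one component of $G_{\ol{T}}$, and set $T'=T\setminus\{i\}$. The component of $G_{\ol{T'}}=G_{\ol{T}\cup\{i\}}$ containing $i$ is then either $\{i\}$ or $C_1\cup\{i\}$ for a single component $C_1$ of $G_{\ol{T}}$, while the other components are unchanged. I claim $P_{T'}\subseteq P_T$: the linear generators $x_j,y_j$ ($j\in T'$) lie in $P_T$ because $T'\subseteq T$, and every $2$-minor $x_ay_b-x_by_a$ coming from the merged component either involves the index $i$ (and is then absorbed by $x_i,y_i\in P_T$) or has $a,b\in C_1$ (and then lies in $J_{\wt C_1}\subseteq P_T$). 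Since $x_i\in P_T\setminus P_{T'}$, the inclusion is strict, so $P_T$ is not minimal.

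For the converse, ``$T\in\mathcal{C}(G)\Rightarrow P_T$ minimal'', suppose $P_{T'}\subseteq P_T$ for some $T'\neq T$. From the linear generators and the remark above, $T'\subseteq T$, so I may pick $i\in T\setminus T'$. Because $T\in\mathcal{C}(G)$, the vertex $i$ is adjacent to two distinct components $C_1,C_2$ of $G_{\ol{T}}$; choose neighbours $a\in C_1$ and $b\in C_2$ of $i$. In $G_{\ol{T'}}$ the vertices $a,b$ are joined through $i\in\ol{T'}$, hence lie in one component, so $x_ay_b-x_by_a\in P_{T'}$. The main obstacle is to verify that this binomial does \emph{not} lie in $P_T$, which would contradict the containment. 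For this I would use the description $S/P_T\cong\bigotimes_{C}R_C$, the tensor product over $K$ of the rings $R_C=K[x_c,y_c:c\in C]/J_{\wt C}$ attached to the components $C$ of $G_{\ol{T}}$. Because the defining relations of each $J_{\wt C}$ are quadratic, the degree-one classes $\bar x_a,\bar y_a$ in $R_{C_1}$ and $\bar x_b,\bar y_b$ in $R_{C_2}$ are $K$-linearly independent; hence the two pure tensors $\bar x_a\otimes\bar y_b$ and $\bar y_a\otimes\bar x_b$ representing $x_ay_b$ and $x_by_a$ are linearly independent in $R_{C_1}\otimes_K R_{C_2}$ and cannot cancel. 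Thus the image of $x_ay_b-x_by_a$ is nonzero, i.e.\ $x_ay_b-x_by_a\notin P_T$, the assumed containment is impossible, and $P_T$ is minimal. The only delicate point throughout is precisely this nonvanishing, which rests on the quadratic nature of the generators of the complete-graph binomial edge ideals and the fact that their coordinate rings split as a tensor product across components.
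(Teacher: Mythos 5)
Your argument is correct. Note that the paper itself gives no proof of this lemma---it is quoted from \cite{HH} (where it appears as part of Section 3)---so there is no internal proof to compare against; what you have written is a correct, self-contained reconstruction along the standard lines: reduce minimality of $P_T$ in $\MM(G)$ to inclusion-minimality within the finite family $\{P_{T'}\}_{T'\subseteq[n]}$ of primes whose intersection is $J_G$, observe that $x_j\in P_{T'}$ iff $j\in T'$ (so containments force $T'\subseteq T$), and then characterize combinatorially when $P_{T'}\subseteq P_T$. The two points that genuinely need checking are both handled correctly: the equivalence of the cutpoint property for $i\in T$ with $i$ being adjacent to at least two components of $G_{\ol{T}}$ (including the degenerate case where $i$ becomes isolated), and the nonvanishing of $x_ay_b-x_by_a$ modulo $P_T$ when $a,b$ lie in distinct components, which you justify via the graded tensor decomposition $S/P_T\iso\bigotimes_C R_C$ and the fact that $J_{\wt{C}}$ has no linear part. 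This matches the approach of the original source in substance, so nothing further is needed.
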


\begin{Lemma}\label{lem:unmixwd}\cite{RR} Let $G$ be a connected graph. Then $J_G$ is unmixed if and only if for all $T\in\mathcal{C}(G)$ we have $c(T)=|T|+1$.
\end{Lemma}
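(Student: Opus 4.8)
The plan is to exploit the fact that $J_G$ is radical in order to reduce unmixedness to a height comparison among the primes $P_T(G)$, and then to translate that comparison into the stated combinatorial identity via the formula $\height P_T = n + |T| - c(T)$ and Lemma \ref{lem:cutpoint}.

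First I would record that, since $J_G = \bigcap_{T \subseteq [n]} P_T(G)$ is an intersection of prime ideals, it is radical. Hence $J_G$ has no embedded primes and its associated primes are exactly its minimal primes, which by Lemma \ref{lem:cutpoint} are precisely the $P_T(G)$ with $T \in \mathcal{C}(G)$. Consequently $J_G$ is unmixed if and only if all the primes $P_T(G)$, $T \in \mathcal{C}(G)$, have one and the same height.

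Next I would pin that common height down using a distinguished member of $\mathcal{C}(G)$. The empty set satisfies the cutpoint property vacuously, so $\emptyset \in \mathcal{C}(G)$, and because $G$ is connected we have $c(\emptyset) = 1$; thus $\height P_\emptyset(G) = n + 0 - 1 = n - 1$. Since $P_\emptyset(G)$ is itself a minimal prime, if $J_G$ is unmixed then every minimal prime must have height exactly $n-1$. Now for a fixed $T \in \mathcal{C}(G)$ the height formula gives $\height P_T(G) = n + |T| - c(T)$, and the equality $\height P_T(G) = n - 1$ is equivalent to $n + |T| - c(T) = n - 1$, i.e.\ to $c(T) = |T| + 1$. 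Running this over all $T \in \mathcal{C}(G)$ yields both implications: if $J_G$ is unmixed then $c(T) = |T| + 1$ for every $T \in \mathcal{C}(G)$, and conversely if $c(T) = |T| + 1$ for all such $T$ then every $P_T(G)$ has height $n - 1$, so they all agree and $J_G$ is unmixed.

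I do not expect a serious obstacle here, as each ingredient is already available; the one point that needs care is the reduction of ``unmixed'' to ``all minimal primes have equal height,'' which depends on radicality to rule out embedded components, and the use of $P_\emptyset(G)$ to fix the common value at $n-1$. It is worth emphasizing that $n-1$ need not be the \emph{minimum} height among the $P_T(G)$ (for the claw it is not), so the argument must compare every minimal prime against the fixed reference height $n-1$ rather than against the height of $J_G$.
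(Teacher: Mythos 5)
Your proposal is correct; the paper itself imports this lemma from \cite{RR} without reproducing a proof, and your argument (radicality of $J_G$ as an intersection of primes, reduction of unmixedness to equality of heights of the minimal primes $P_T$ with $T\in\mathcal{C}(G)$, the formula $\height P_T=n+|T|-c(T)$, and the normalization $\height P_\emptyset=n-1$ from connectedness) is exactly the standard derivation. Your closing caveat is also apt: the claw example shows that $P_\emptyset$ need not realize the minimal height, so the right criterion is that all minimal primes share the common value $n-1$, which is precisely what you check.
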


\section{Complete intersection and almost complete intersection}\label{sec:main}
Let $S$ be a standard graded polynomial ring over a field $K$. For a homogeneous ideal $J \subseteq S$, $J$ is called a complete intersection ideal (resp. an almost complete intersection ideal)  if $J$ is minimally generated by $\height J$ (resp. $\height J + 1$) elements. A homogeneous ideal has deviation $\dev(J)$ if it is minimally generated by $\height I +\dev(J)$ elements. 
Throughout this section let $S=K[\{x_i,y_i\}:i\in V(G)]$, $J_G$ be the binomial edge ideal of a graph $G$ and $\mu(J_G)$ the minimal number of generators of $J_G$. 

A nice combinatorial interpretation of $\dev(J_G)$ is given by the following
\begin{Remark}\label{rem:mg}
Suppose that $\height J_G=\height P_\emptyset$. Then $\height J_G=n-c$ where $c$ are the connected components of $G$. Therefore \[\dev(J_G)=\mu(J_G)-n+c=m(G).\]
\end{Remark}

\begin{Theorem}\label{the:dev0}
Let $G$ be a graph. Then $J_G$ is complete intersection if and only if each component of $G$ is a path graph.
\end{Theorem}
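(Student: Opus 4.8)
The plan is to translate the complete intersection condition $\mu(J_G)=\height J_G$ into purely combinatorial data. Two facts drive the argument: first, the binomials $f_{ij}$ attached to the edges of $G$ form a minimal system of generators, so $\mu(J_G)=|E(G)|$; second, by the height formula recalled above, every minimal prime $P_T$ (with $T\in\mathcal{C}(G)$ by Lemma~\ref{lem:cutpoint}) has $\height P_T=n+|T|-c(T)$, and in particular $\height J_G\le \height P_\emptyset=n-c$, where $c$ is the number of connected components of $G$. Thus $J_G$ is a complete intersection exactly when $|E(G)|=\height J_G$.

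For the implication ($\Leftarrow$), suppose each component of $G$ is a path. Then $G$ is a forest, so $|E(G)|=n-c$. It remains to see that $\height J_G=n-c$ as well, i.e.\ that no minimal prime drops the height below $\height P_\emptyset$. I would show that for every $T\in\mathcal{C}(G)$ one has $c(T)=|T|+c$: the cutpoint property forces each $i\in T$ to be an internal vertex of its path whose two neighbours lie in $\ol{T}$, and removing a set of such pairwise non-adjacent internal vertices from a path splits it into exactly (number removed)$+1$ intervals. Summing over the $c$ path-components gives $c(T)=|T|+c$, hence $\height P_T=n-c$ for every minimal prime, so $\height J_G=n-c=|E(G)|$ and $J_G$ is a complete intersection. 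Equivalently, once $\height J_G=\height P_\emptyset$ is known, Remark~\ref{rem:mg} gives $\dev(J_G)=m(G)=0$.

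For the implication ($\Rightarrow$), assume $J_G$ is a complete intersection, so $|E(G)|=\mu(J_G)=\height J_G\le n-c$. Since any graph satisfies $|E(G)|\ge n-c$ with equality if and only if $G$ is a forest, we conclude $|E(G)|=n-c$, so $G$ is a forest and moreover $\height J_G=n-c=\height P_\emptyset$. Now suppose, for contradiction, that some component is not a path; being a tree, it then has a vertex $v$ of degree $d\ge 3$. Such a $v$ is a cut point of $G$, so $\{v\}\in\mathcal{C}(G)$, and removing $v$ from its tree-component produces exactly $d$ pieces, whence $c(\{v\})=c-1+d$ and
\[
\height P_{\{v\}}=n+1-(c-1+d)=n-c-(d-2)<n-c .
\]
This contradicts $\height J_G=n-c$. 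Hence every vertex of $G$ has degree at most $2$, and since $G$ is a forest each component is a path.

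The main obstacle is the forward direction: the complete intersection property forces $G$ to be a forest only at the level of edge counts, and one must still exclude trees with a branch vertex. The crucial point is that $T=\emptyset$ need not realise $\height J_G$; exhibiting the single-vertex set $T=\{v\}$ at a vertex of degree $\ge 3$ and reading off the strict drop in height from $\height P_T=n+|T|-c(T)$ is precisely what separates paths from general trees.
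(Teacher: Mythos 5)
Your proof is correct, and it takes a genuinely different route from the paper's for the decisive step. Both arguments begin the same way: using $\mu(J_G)=|E(G)|$ and the fact that a connected graph on $n_i$ vertices has at least $n_i-1$ edges, the complete intersection hypothesis forces $|E(G)|=n-c$ and $\height J_G=n-c$, so $G$ is a forest (the paper gets $\height J_G=\height P_\emptyset$ from unmixedness of complete intersections, you get it from the chain of inequalities $|E(G)|=\height J_G\le\height P_\emptyset=n-c\le |E(G)|$; this is a cosmetic difference). The divergence is in how one passes from forests to paths. The paper simply invokes Corollary~1.2 of \cite{HEH}, which classifies the unmixed (equivalently Cohen--Macaulay) binomial edge ideals of forests. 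You instead prove the needed facts from scratch out of the primary decomposition: in the forward direction you exhibit, at any branch vertex $v$ of degree $d\ge 3$ in a tree, the minimal prime $P_{\{v\}}$ of height $n+1-(c-1+d)=n-c-(d-2)<n-c$, contradicting $\height J_G=n-c$; in the backward direction you check that for a disjoint union of paths every $T\in\mathcal{C}(G)$ consists of pairwise non-adjacent internal vertices, whence $c(T)=|T|+c$ and every minimal prime has height exactly $n-c$, so $\height J_G=n-c=\mu(J_G)$. Your version is self-contained and makes the combinatorics of $\mathcal{C}(G)$ and the height formula $\height P_T=n+|T|-c(T)$ do all the work; the paper's version is shorter because it outsources exactly this computation to the known classification of Cohen--Macaulay forests. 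Both are complete proofs of the theorem.
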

\begin{proof}
 Let $G=\bigcup_{i=1}^c G_i$ where $G_i$ are the connected components of $G$. Let $n_i=|V(G_i)|$ for $i=1,\ldots,c$. Since $G_i$ is connected it has at least $n_i-1$ edges, namely the number of edges of a tree. Hence
 \[
  \mu(J_G)\geq \sum_{i=1}^c (n_i-1)=n-c.
 \]
Since $J_G$ is a complete intersection then it is unmixed, hence $\height J_G=\height P_\emptyset$. By Remark \ref{rem:mg} and since complete intersection implies $\dev(J_G)=0$ we obtain that $\mu(J_G)=n-c$. 
Therefore every connected component $G_i$ is a tree. Now the proof is a consequence of Corollary 1.2 of \cite{HEH}.
\end{proof}

In general almost complete intersection binomial edge ideals are not unmixed as the following example shows
\begin{Example}\label{exa:claw}
 Let $G$ be the graph on $4$ vertices and edges 
 \[
 \{\{1,2\},\{1,3\},\{1,4\}\},
 \]
 namely the claw graph. We observe that
 \[
  J_G=P_\emptyset\cap P_{\{1\}}
 \]
where $\height P_\emptyset=3$ and $\height P_{\{1\}}=2$. 
\end{Example}

\begin{Remark}\label{rem:conn}
 If $J_G$ is an unmixed almost complete intersection binomial edge ideal with $c$ components we have that $G$ has $c-1$ components that are path graphs and $1$ that contains only one cycle, namely a unicyclic graph. The proof is similar to the proof of Theorem \ref{the:dev0}.
\end{Remark}
Thanks to Remark \ref{rem:conn} we assume from now on that $G$ is a connected unicyclic graph.
\begin{Proposition}\label{pro:tri}
 Let $\GG_3$ be the set of graphs such that for all $G\in \GG_3$ we have
\[
 V(G)=\{u_1,\ldots,u_r,v_1,\ldots,v_s,w_1,\ldots,w_t\}
\]
with $r\geq 1$, $s\geq 1$, $t\geq 1$ and edge set 
\begin{equation*}
\begin{split}
E(G) =&\{\{u_i,u_{i+1}\}:i=1,\ldots,r-1\}\cup\{v_i,v_{i+1}\}:i=1,\ldots,s-1\}\cup\\
 & \cup\{w_i,w_{i+1}\}:i=1,\ldots,t-1\} \cup\{\{u_1,v_1\},\{u_1,w_1\},\{v_1,w_1\}\}.  
\end{split}
\end{equation*}
Then $S/J_G$ is Cohen-Macaulay for all $G\in \GG_3$.
\end{Proposition}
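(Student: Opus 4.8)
The plan is to induct on $N=r+s+t$, peeling off a leaf at the free end of one path at each step and showing that this operation preserves Cohen--Macaulayness. For the base case $N=3$ (so $r=s=t=1$) the graph $G$ is the triangle on $\{u_1,v_1,w_1\}$, i.e. the complete graph $K_3$. Then $J_G$ is precisely the ideal of $2$-minors of the generic $2\times 3$ matrix with rows $(x_{u_1},x_{v_1},x_{w_1})$ and $(y_{u_1},y_{v_1},y_{w_1})$, so $S/J_G$ is a determinantal ring and is Cohen--Macaulay by the classical theory of maximal minors of a generic matrix.

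For the inductive step assume $N\ge 4$, so at least one path has length $\ge 2$; after relabelling we may assume $r\ge 2$, whence $u_r$ is a leaf of $G$. Let $G'\in\GG_3$ be the graph with parameters $(r-1,s,t)$, that is $G'=G\setminus\{u_r\}$, and put $S'=K[\{x_i,y_i\}:i\in V(G')]$, so that $S=S'[x_{u_r},y_{u_r}]$. By the induction hypothesis $S'/J_{G'}$ is Cohen--Macaulay, hence so is the polynomial extension $S/J_{G'}S\iso (S'/J_{G'})[x_{u_r},y_{u_r}]$. Since the only edge of $G$ not in $G'$ is $\{u_{r-1},u_r\}$, we have the ideal equality
\[
 J_G = J_{G'}S + (f_{u_{r-1}u_r}),\qquad f_{u_{r-1}u_r}=x_{u_{r-1}}y_{u_r}-x_{u_r}y_{u_{r-1}}.
\]
The plan is to finish by showing that $f_{u_{r-1}u_r}$ is a (homogeneous) nonzerodivisor on the Cohen--Macaulay ring $S/J_{G'}S$; the quotient $S/J_G$ is then Cohen--Macaulay, and a dimension count confirms that passing to this quotient drops the dimension by exactly one.

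The crux is therefore to verify that $f_{u_{r-1}u_r}$ avoids every associated prime of $J_{G'}S$. First note that $u_{r-1}$ is a free vertex of $G'$: if $r-1\ge 2$ it is a leaf, while if $r=2$ it is the triangle vertex $u_1$, whose neighbours $v_1,w_1$ form a clique; in either case $u_{r-1}$ lies in a unique maximal clique of $G'$. Because $S'/J_{G'}$ is Cohen--Macaulay it is unmixed, so by Lemma \ref{lem:cutpoint} the associated primes of $J_{G'}$ are exactly the $P_T(G')$ with $T\in\mathcal{C}(G')$, and those of $J_{G'}S$ are their extensions. Expanding $f_{u_{r-1}u_r}$ as a polynomial in $x_{u_r},y_{u_r}$ with coefficients $x_{u_{r-1}}$ and $-y_{u_{r-1}}$ in $S'$, and reading off \eqref{eq:prime}, one sees that $f_{u_{r-1}u_r}\in P_T(G')S$ holds if and only if $x_{u_{r-1}},y_{u_{r-1}}\in P_T(G')$, i.e. if and only if $u_{r-1}\in T$. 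But a free vertex is simplicial in every induced subgraph containing it, and a simplicial vertex is never a cutpoint; hence $u_{r-1}$ fails the cutpoint property in $G'_{\,\ol{T}\cup\{u_{r-1}\}}$, so no $T\in\mathcal{C}(G')$ contains $u_{r-1}$. Consequently $f_{u_{r-1}u_r}$ lies in none of the associated primes and is the required nonzerodivisor. The main obstacle is precisely this last combinatorial observation—that a free vertex never belongs to a set $T$ with the cutpoint property—together with the bookkeeping that $J_G=J_{G'}S+(f_{u_{r-1}u_r})$; once these are in hand the homological conclusion is immediate.
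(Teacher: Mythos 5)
Your proof is correct, but it takes a genuinely different route from the paper. The paper's argument is two lines: the base case $r=s=t=1$ is the complete graph $K_3$, hence Cohen--Macaulay, and the three pendant paths are then attached at the free vertices $u_1,v_1,w_1$ by invoking Theorem 2.7 of \cite{RR}, the gluing theorem which says that the union of two Cohen--Macaulay binomial edge ideals meeting in a single vertex that is free in both clique complexes is again Cohen--Macaulay (paths being complete intersections, hence Cohen--Macaulay). You instead peel off one leaf at a time and prove directly that the new generator $f_{u_{r-1}u_r}$ is a nonzerodivisor on $S/J_{G'}S$, using the description of the minimal primes $P_T(G')$ from Lemma \ref{lem:cutpoint}, the absence of embedded primes in a Cohen--Macaulay ring, and the observation that a free vertex never lies in a set $T$ with the cutpoint property (which is exactly Proposition 2.1 of \cite{RR}, cited elsewhere in the paper for the algorithm). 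Every step of your argument checks out: the coefficient-wise criterion for membership of $f_{u_{r-1}u_r}$ in the extended prime $P_T(G')S$ is valid because $S$ is free over $S'$, and $x_{u_{r-1}}\in P_T(G')$ forces $u_{r-1}\in T$ by looking at the degree-one part. What your approach buys is self-containedness --- you essentially re-prove, in the special case of attaching a single edge at a free vertex, the gluing theorem that the paper uses as a black box --- at the cost of a longer induction; the paper's proof is shorter but leans entirely on Theorem 2.7 of \cite{RR}.
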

\begin{proof}
 If $r=s=t=1$ then $G$ is a complete graph. Hence it is Cohen-Macaulay. Since $u_1$, $v_1$ and $w_1$ are free vertices in $\Delta(G)$, by Theorem 2.7 of \cite{RR} the assertion follows easily.
\end{proof}

\begin{Lemma}\label{lem:fan}
 Let $G$ be the graph with vertex set $V(G)=\{1,\ldots,6\}$ and edge set 
 $E(G)=\{\{1,2\},\{2,3\},\{2,4\},\{2,5\},\{3,4\},\{4,5\},\{5,6\}\}$.
 Then $S/J_G$ is Cohen-Macaulay.
\end{Lemma}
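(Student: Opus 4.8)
The plan is to show that $J_G$ is unmixed of height $5$ and then that $\depth S/J_G=\dim S/J_G=7$, so that $S/J_G$ is Cohen--Macaulay. First I would determine $\mathcal{C}(G)$. Since $1$ and $6$ are leaves and $3$ has both of its neighbours ($2$ and $4$) joined by an edge, none of $1,3,6$ can ever be a cutpoint of an induced subgraph; the only vertices that become cutpoints after deleting others are $2,4,5$. Running the cutpoint property (Lemma~\ref{lem:cutpoint}) over the admissible sets then gives
\[
 \mathcal{C}(G)=\{\emptyset,\ \{2\},\ \{5\},\ \{2,4\},\ \{2,5\}\}.
\]
For each such $T$ one checks $c(T)=|T|+1$, the two leaves and the isolated vertex $3$ supplying exactly the extra components needed (for instance $T=\{2,4\}$ leaves $\{1\},\{3\},\{5,6\}$ on $\ol T$). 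By Lemma~\ref{lem:unmixwd} this proves $J_G$ unmixed; every $P_T$ then has $\height P_T=n+|T|-c(T)=5$, so $\height J_G=5$ and $\dim S/J_G=2\cdot 6-5=7$.

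For the Cohen--Macaulay property I would first strip the one removable leaf. The vertex $6$ is a leaf and $N_G(5)\setminus\{6\}=\{2,4\}$ is an edge, so $5$ is a free vertex of $G\setminus\{6\}$; hence by the free-vertex reduction of \cite{RR} (Theorem~2.7), $S/J_G$ is Cohen--Macaulay if and only if $S/J_{G'}$ is, where $G'=G\setminus\{6\}$. The other leaf $1$ \emph{cannot} be stripped the same way, because $N_G(2)\setminus\{1\}=\{3,4,5\}$ is not a clique (the edge $\{3,5\}$ is missing), so $2$ is not free in $G\setminus\{1\}$. This is precisely the new feature of the graph, the two triangles $\{2,3,4\}$ and $\{2,4,5\}$ sharing the edge $\{2,4\}$, and it is the reason the free-vertex machinery alone does not close the argument.

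It remains to prove that $S/J_{G'}$ is Cohen--Macaulay, where $G'=\cone(2,H)$ with $H$ the disjoint union of the path $3-4-5$ and the isolated vertex $1$. This is the crux. I would finish by an edge--deletion short exact sequence
\[
 0\to \bigl(S/(J_{G'\setminus e}:f_e)\bigr)(-2)\xrightarrow{\,f_e\,} S/J_{G'\setminus e}\to S/J_{G'}\to 0
\]
for a well-chosen edge $e$ of $G'$, identifying the colon ideal $(J_{G'\setminus e}:f_e)$ combinatorially and applying the depth lemma to force $\depth S/J_{G'}=7$; since $G'$ has only five vertices, one can alternatively confirm $\depth S/J_{G'}=7$ outright with the primary-decomposition routine of Section~\ref{sec:algo}.

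The main obstacle is exactly this last step, and the reason it is genuinely delicate is structural: the ``fan core'', namely the diamond on $\{2,3,4,5\}$, is not even unmixed on its own (there the set $\{2,4\}$ gives $c=2\neq 3$), so the pendant at $2$ is indispensable and must be carried through the bookkeeping. Consequently one must choose the edge $e$ and compute the colon so that \emph{both} outer terms of the sequence inherit enough depth, i.e.\ so that the extra connected component contributed by the pendant at $2$ is actually used; this is where all the real work of the lemma lies.
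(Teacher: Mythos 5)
Your setup (the computation of $\mathcal{C}(G)$, unmixedness, $\dim S/J_G=7$) is correct, and your first reduction is exactly the paper's: vertex $5$ is free in both $\Delta(G')$ and $\Delta(\{5,6\})$, so Theorem~2.7 of \cite{RR} reduces the lemma to the Cohen--Macaulayness of $G'=G\setminus\{6\}$. But at that point you have only relocated the problem, not solved it. You correctly identify $G'$ as $\cone(2,H_1\sqcup H_2)$ with $H_1$ the isolated vertex $1$ and $H_2$ the path $3\text{--}4\text{--}5$, and then you stop: the ``well-chosen edge $e$'' is never chosen, the colon ideal $(J_{G'\setminus e}:f_e)$ is never identified, and the depth bookkeeping in the proposed exact sequence is never carried out. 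Your fallback --- ``confirm $\depth S/J_{G'}$ with the primary-decomposition routine of Section~\ref{sec:algo}'' --- does not work either, since that algorithm computes $\mathcal{C}(G)$ (hence minimal primes and unmixedness), not depth. So the step you yourself flag as ``where all the real work of the lemma lies'' is left unproven; this is a genuine gap.

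The missing ingredient is Theorem~3.8 of \cite{RR}, which is precisely a Cohen--Macaulayness criterion for cones $\cone(v,H_1\sqcup H_2)$ over a disjoint union; applied to $G'=\cone(2,H_1\sqcup H_2)$ it closes the argument in one line, and this is how the paper proceeds. (It is worth noting that you recognized the exact cone structure that the theorem requires but did not invoke it.) Two smaller points: your claimed value $\depth S/J_{G'}=7$ cannot be right in any of the natural ambient rings --- for the connected $5$-vertex graph $G'$ one has $\height J_{G'}=4$, so the relevant dimension is $6$ in $K[x_1,\ldots,y_5]$ or $8$ if you insist on keeping the variables $x_6,y_6$ --- and your parenthetical remark that the diamond on $\{2,3,4,5\}$ is not unmixed, while true and a good sanity check that the pendant edge at $2$ is essential, does not by itself advance the proof.
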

\begin{proof}
Let $H_1$ be the graph with one vertex, $V(H_1)=\{1\}$, $H_2$ the path graph with edges $\{3,4\}$ and $\{4,5\}$   
and let $G_1=\cone(2, H_1\sqcup H_2)$. By Theorem 3.8 of \cite{RR} $G_1$ is Cohen-Macaulay. Now let $G_2$ be the complete graph on the vertices $V(G_2)=\{5,6\}$. Then $5$ is a free vertex in  $\Delta(G_1)$ and $\Delta(G_2)$. By Theorem 2.7 of \cite{RR} the assertion follows. 
\end{proof}

\begin{Proposition}\label{pro:qua}
 Let $\GG_4$ be the set of graphs such that for all $G\in \GG_4$ we have
 \[
V(G)=\{u_1,u_2,u_3,\ldots,u_r,v_1,v_2,v_3,\ldots,v_s\}
 \]
with $r\geq 3$ and $s\geq 3$ and edge set 
\begin{equation*}
\begin{split}
E(G)=&\{\{u_i,u_{i+1}\}:i=1,\ldots,r-1\}\cup\{\{v_i,v_{i+1}\}:i=1,\ldots,s-1\}\cup\\
&\cup \{\{u_1,v_1\},\{u_2,v_2\}\}.    
\end{split}
\end{equation*}
Then $S/J_G$ is Cohen-Macaulay for all $G\in \GG_4$.
\end{Proposition}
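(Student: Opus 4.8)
The plan is to prove the statement by induction on $r+s$, peeling the two path legs one edge at a time by means of the free-vertex gluing of Theorem 2.7 of \cite{RR}, and to reduce everything to the single base case $r=s=3$. The essential observation is that $G$ is triangle free: its only cycle is the induced $4$-cycle $u_1,u_2,v_2,v_1$, and the two path legs together with the edges $\{u_1,v_1\},\{u_2,v_2\}$ create no $3$-clique. Hence the facets of $\Delta(G)$ are exactly the edges of $G$, and a vertex is free precisely when it has degree one. In particular the tips $u_r$ and $v_s$ are free vertices of $\Delta(G)$.

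First I would run the induction upward from the base case. Suppose $S/J_{G'}$ is Cohen-Macaulay, where $G'\in\GG_4$ has parameters $(r-1,s)$ with $r-1\ge 3$, i.e. $G'$ is $G$ with the tip $u_r$ deleted. Writing $H$ for the edge $\{u_{r-1},u_r\}$, we have $G=G'\cup H$ with $V(G')\cap V(H)=\{u_{r-1}\}$. Since $r-1\ge 3$, the vertex $u_{r-1}$ is the tip of the $u$-leg of $G'$, hence a free vertex of $\Delta(G')$, and it is of course free in the edge $H$; moreover $S/J_H$ is a complete intersection, so Cohen-Macaulay, by Theorem \ref{the:dev0}. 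Theorem 2.7 of \cite{RR} then shows $S/J_G$ is Cohen-Macaulay. Raising $r$ this way, and symmetrically $s$, reaches every pair $(r,s)$ with $r,s\ge 3$ from $(3,3)$. Note that the induction must stop at $3$: when $r-1=2$ the candidate gluing vertex would be $u_2$, which lies on the cycle and so has degree three and is not free, and this is exactly what the hypotheses $r\ge 3$, $s\ge 3$ secure.

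It remains to treat the base case $G_0$ with $r=s=3$: the induced $4$-cycle $u_1,u_2,v_2,v_1$ carrying one pendant $u_3$ at $u_2$ and one pendant $v_3$ at $v_2$. This is the hard part and the main obstacle. The graph $G_0$ admits no nontrivial free-vertex decomposition — its only free vertices are the leaves $u_3,v_3$ — and it is not a cone, so neither Theorem 2.7 nor the cone criterion (Theorem 3.8 of \cite{RR}) applies, and a bespoke argument is needed. I would first verify unmixedness via Lemma \ref{lem:unmixwd}, enumerating the sets $T\in\mathcal{C}(G_0)$ and checking $c(T)=|T|+1$ in each case; here the two pendants are precisely what restores the equality that fails for the bare $4$-cycle, where the opposite-vertex set $T=\{u_1,v_2\}$ gives $c(T)=2\neq 3$, whereas in $G_0$ the pendants supply the missing components so that $c(T)=3$. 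Granting unmixedness, it then suffices to show $\depth S/J_{G_0}=\dim S/J_{G_0}$, which I would obtain by a direct computation of the primary decomposition and depth (for instance with the CoCoA implementation used elsewhere in the paper), or alternatively through an exact sequence relating $J_{G_0}$ to the ideals of smaller graphs together with the depth lemma; I expect the point that the relevant colon ideal is again Cohen-Macaulay to be the delicate step.
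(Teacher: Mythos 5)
Your reduction to the case $r=s=3$ is sound and is essentially what the paper does: $G$ is unicyclic with only the $4$-cycle $u_1,u_2,v_2,v_1$, hence triangle-free, so the cliques of $\Delta(G)$ are the edges and the leaf $u_r$ (resp.\ $v_s$) is a free vertex; Theorem 2.7 of \cite{RR} then lets you attach the path legs at free vertices (the paper attaches the whole paths at $u_3$ and $v_3$ in one step, your edge-by-edge induction is equivalent). The problem is the base case, which you correctly identify as the hard part and then do not prove. Offering either an unexecuted CoCoA computation or an unspecified ``exact sequence relating $J_{G_0}$ to the ideals of smaller graphs'' is not a proof; moreover your mention of ``the relevant colon ideal'' suggests a sequence of the form $0\to S/(J:f)\to S/J\to S/(J,f)\to 0$, which is not how the argument goes, and the auxiliary graphs that actually work are \emph{larger} than $G_0$, not smaller.

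The missing idea is the paper's splitting of the primary decomposition. One has $J_{G_0}=\bigcap_{T\in\CC(G_0)}P_T$ with $\CC(G_0)=\{\emptyset,\{u_2\},\{v_2\},\{u_2,v_1\},\{u_1,v_2\},\{u_2,v_2\}\}$. Let $J_H$ be the intersection of the five primes omitting $P_{\{u_1,v_2\}}$ and $J_{H'}$ the intersection omitting $P_{\{u_2,v_1\}}$, so that $J_{G_0}=J_H\cap J_{H'}$. The key point is that $J_H$ and $J_{H'}$ are again binomial edge ideals, namely of the graphs obtained from $G_0$ by adding one diagonal chord of the $4$-cycle; both are isomorphic to the fan graph of Lemma \ref{lem:fan}, which is Cohen--Macaulay of dimension $7$ by a cone-plus-gluing argument. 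Furthermore $J_H+J_{H'}=J_{H''}$ where $H''$ is a chain of complete graphs glued along free vertices, again Cohen--Macaulay of dimension $7$. The depth lemma applied to $0\to S/J_{G_0}\to S/J_H\oplus S/J_{H'}\to S/J_{H''}\to 0$ yields $\depth S/J_{G_0}\geq 7=\dim S/J_{G_0}$, and unmixedness gives equality. Without this construction (or an actually executed depth computation), your treatment of the base case, and hence of the proposition, is incomplete.
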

\begin{proof}
 Let $s=r=3$.  We observe that 
 \[
 \mathcal{C}(G)=\{\emptyset, \{u_2 \}, \{v_2\}, \{u_2,v_1\}, \{u_1,v_2\}, \{u_2,v_2\}\}
 \]
 and $J_G$ is unmixed with $\dim S/J_G=7$ with $S=K[\{x_i,y_i\}: i\in V(G)]$ . We need to show that $\depth S/J_G\geq 7$. Let 
\[
J_H=P_\emptyset \cap P_{\{u_2\}}\cap P_{\{v_2\}}\cap P_{\{u_2,v_1\}} \cap P_{\{u_2,v_2\}}
\]
and
\[
J_{H'}=P_\emptyset \cap P_{\{u_2\}}\cap P_{\{v_2\}}\cap P_{\{u_1,v_2\}} \cap P_{\{u_2,v_2\}}
\]
be  binomial edge ideals on $S$. The graphs $H$ and $H'$ are both isomorphic to the graph described in Lemma \ref{lem:fan}. Hence they are Cohen-Macaulay with dimension equal to $7$. Let
\[
 J_{H''}=J_H+J_{H'}\subset S.
\]
We observe that $H''=G_1\cup G_2\cup G_3$ where $G_1$ is the complete graph on the vertex set $\{u_1,u_2,v_1,v_2\}$ and $G_2$ (resp. $G_3$) is the complete graph on the vertex set $\{u_2,u_3\}$ (resp. $\{v_2,v_3\}$). By Theorem 2.7 of \cite{RR}, applied twice (or by Theorem 1.1 of \cite{HEH}) $S/J_{H''}$ is Cohen-Macaulay with dimension equal to $7$. Thanks to depth Lemma applied to the following exact sequence 
\[
0\longrightarrow S/J_G \longrightarrow S/J_H\oplus S/J_{H'}\longrightarrow S/J_{H''}\longrightarrow 0
\]
we obtain that depth $S/J_G$ is greater than or equal to $7$. Hence $S/J_G$ is Cohen-Macaulay, too. The assertion follows by Theorem 2.7 of \cite{RR} observing that $u_3$ and $v_3$ are free vertices in $\Delta(G)$.  
\end{proof}

\begin{Theorem}\label{the:dev1}
 Let $G$ be a graph and $J_G$ is an almost complete intersection binomial edge ideal. The following conditions are equivalent:
 \begin{enumerate}
  \item $G$ is in $\GG_3\cup \GG_4$  defined as in Propositions \ref{pro:tri} and \ref{pro:qua};
  \item $S/J_G$ is CM; 
  \item $S/J_G$ is unmixed.
 \end{enumerate}
\end{Theorem}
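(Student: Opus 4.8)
The plan is to prove the cyclic chain $(1)\Rightarrow(2)\Rightarrow(3)\Rightarrow(1)$. The implication $(1)\Rightarrow(2)$ is nothing but Propositions \ref{pro:tri} and \ref{pro:qua}, which assert that $S/J_G$ is Cohen--Macaulay for every $G\in\GG_3\cup\GG_4$. The implication $(2)\Rightarrow(3)$ is the standard fact that a Cohen--Macaulay quotient $S/J_G$ is unmixed, since all associated primes of $J_G$ then share the height $\height J_G$. Thus the whole content lies in $(3)\Rightarrow(1)$, and the engine throughout is Lemma \ref{lem:unmixwd}: when $G$ is connected, $J_G$ is unmixed precisely when $c(T)=|T|+1$ for every $T\in\mathcal{C}(G)$.

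For $(3)\Rightarrow(1)$, assume $J_G$ is unmixed. By Remark \ref{rem:conn} we may assume $G$ is connected and unicyclic, so $G$ is a single cycle $C$ of length $g$ with trees attached at some of its vertices. First I would run the criterion on singletons. Every cut vertex $v$ gives $\{v\}\in\mathcal{C}(G)$, so unmixedness forces $c(\{v\})=2$. If $v\notin C$ has degree $d\ge 2$, then since exactly one neighbour of $v$ lies on the (unique) path back to $C$, deleting $v$ produces exactly $d$ components, forcing $d=2$; hence every attached tree is a path. If $v\in C$ carries $k\ge 1$ trees, deleting $v$ opens $C$ into one path and frees the $k$ trees, so $c(\{v\})=k+1$ forces $k=1$. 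Consequently each cycle vertex carries at most one tail, and every tail is a path; write $a_v\in\{0,1\}$ according as $v\in C$ does or does not carry a tail.

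The decisive step is to test pairs $T=\{i,j\}$ of non-adjacent vertices of $C$. Non-adjacency guarantees that $i$ keeps both of its cycle-neighbours in $G\setminus\{j\}$ (and symmetrically), so $i$ and $j$ are cut vertices there and $\{i,j\}\in\mathcal{C}(G)$. Deleting $i$ and $j$ cuts $C$ into two nonempty arcs, each retaining the tails of its interior vertices, and frees the tails at $i$ and at $j$, so $c(\{i,j\})=2+a_i+a_j$. Unmixedness then forces $a_i+a_j=1$ for every non-adjacent pair. Setting $A=\{v\in C:a_v=1\}$, this says that neither $A$ nor its complement contains two non-adjacent vertices of $C$; that is, both $A$ and $A^c$ are cliques of $C$. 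Since $C$ is triangle-free for $g\ge 4$, each clique has at most two vertices, whence $g\le 4$. If $g=3$ the pair condition is vacuous and $G$ is a triangle with a possibly trivial path tail at each vertex, i.e.\ $G\in\GG_3$. If $g=4$ we obtain $|A|=|A^c|=2$ with $A$ an edge, so exactly two adjacent cycle vertices carry path tails, i.e.\ $G\in\GG_4$.

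The step I expect to be delicate is the pair computation: one must check that $\{i,j\}$ genuinely has the cutpoint property, that both arcs are nonempty (this uses that $i$ and $j$ are non-adjacent), and that no tail can reconnect the two arcs, which is exactly the point where unicyclicity enters, so that the count $c(\{i,j\})=2+a_i+a_j$ is correct. Once this is in hand, the remaining bookkeeping of deducing $g\le 4$ and the adjacency of the two tailed vertices from the fact that $A$ and $A^c$ are cliques is routine.
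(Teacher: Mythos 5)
Your argument is correct, and its engine is the same as the paper's: Lemma \ref{lem:unmixwd} applied to singletons and to pairs of non-adjacent cycle vertices, with the component count $c(\{i,j\})=2+a_i+a_j$ doing all the work. Where you genuinely differ is in the organization of $(3)\Rightarrow(1)$. The paper takes the decomposition \eqref{eq:unicyclic} essentially for granted, splits by the girth $l$ into the cases $l=3$, $l=4$, $l\geq 5$, and for $l\geq 5$ runs a sequential degree-chasing argument through the specific pairs $\{i_1,i_3\}$, $\{i_2,i_4\}$, $\{i_1,i_4\}$, $\{i_3,i_l\}$, $\{i_2,i_l\}$ until a set $T$ with $c(T)=4$ appears. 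You instead extract the uniform relation $a_i+a_j=1$ for every non-adjacent pair on the cycle and observe that this makes both $A=\{v: a_v=1\}$ and its complement cliques of the cycle, which kills $l\geq 5$ and pins down the $l=4$ configuration in one stroke; this is cleaner, treats $l=4$ and $l\geq 5$ together, and makes it transparent why exactly two \emph{adjacent} cycle vertices must carry tails. Your singleton analysis (every attached tree is a path, at most one per cycle vertex) also supplies a justification for the structure \eqref{eq:unicyclic} that the paper only asserts as a consequence of Lemma \ref{lem:unmixwd}. The one point worth stating explicitly in a final write-up, which you correctly flag as delicate, is the verification that $\{i,j\}\in\mathcal{C}(G)$ for non-adjacent cycle vertices and that unicyclicity prevents the tails from reconnecting the two arcs; with that in place the proof is complete.
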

\begin{proof}

 $1) \Rightarrow 2)$. It follows by Propositions \ref{pro:tri} and \ref{pro:qua}.
 
 $2) \Rightarrow 3)$. Always true.
 
 $3) \Rightarrow 1)$. Suppose that $J_G$ is unmixed. By Lemma \ref{lem:unmixwd} if $G$ is unicyclic then
 \begin{equation}\label{eq:unicyclic}
  G=C_l\cup \left(\bigcup_{i=1}^r P_i\right) 
 \end{equation}
with $0\leq r\leq l$, where $C_l$ is a cycle of length $l$ and  for all $1\leq i \leq r$, $P_i$ is a path graph,
$|V(P_i)\cap V(C_l)|=1$ and $|V(P_i)\cap V(P_j)|=0$ for all $j\neq	i$. Suppose that $G$ is not in $\GG_3\cup \GG_4$. Then it is an element of one of the following sets:
 \begin{enumerate}
  \item [$\GG_3'$] $=\{G \mbox{ satisfies \eqref{eq:unicyclic} with }l=3\}\setminus \GG_3$;
  \item [$\GG_4'$] $=\{G \mbox{ satisfies \eqref{eq:unicyclic} with }l=4\}\setminus \GG_4$;
  \item [$\GG_>'$] $=\{G \mbox{ satisfies \eqref{eq:unicyclic} with }l\geq 5\}$.
 \end{enumerate}

$G$ does not belong to $\GG_3'$ since it is the empty set. Let $G\in \GG_4'$. We have that there are two vertices $v$ and $v'$ in $C_4$ that are not adjacent and have the same degree, that is either $2$ or $3$. We observe that $T=\{v,v'\}$ has the cutpoint property. If the degree is $2$ then $c(T)=|T|$, if the degree is $3$ then $c(T)=|T|+2$. In both cases $J_G$ is not unmixed by Lemma \ref{lem:unmixwd}. Contradiction. 

Let $G\in \GG'_>$ and let $V(C_l)=\{i_1,i_2,\ldots,i_l\}$ such that $\{i_j,i_{j+1}\}$ is an edge of $G$ with $j=1,\ldots,l-1$ and $\{i_1,i_l\}$ is an edge of $G$, too. Since $G$ is unicyclic then $\{i_1,i_3\}$ has the cutpoint property. In fact $G_{V\setminus\{i_1,i_3\}}$ has at least two connected components one containing the vertex $i_2$ and another containing the vertices $\{i_4,\ldots,i_l\}$. Since $J_G$ is unmixed these components are exactly $3$. We may assume without loss of generality that $i_1$ has degree $3$ and $i_3$ has degree $2$. 
 
 By the same argument also $\{i_2,i_4\}$ has the cutpoint property and either $i_2$ or $i_4$ has degree $3$. 
 Suppose $i_4$ has degree $3$. Then $\{i_1,i_4\}$ has the cutpoint property and $G_{V\setminus\{i_1,i_4\}}$ has $4$ connected components. Contradiction. Hence $i_2$ must have degree $3$.  Also $\{i_3,i_l\}$ has the cutpoint property and since $i_3$ has degree $2$ then $i_l$ has degree $3$. Since $\{i_2,i_l\}$  has the cutpoint property and $G_{V\setminus\{i_1,i_3\}}$ has $4$ connected components we obtain a contradiction. 
 \end{proof}

\section{An algorithm to compute primary decomposition}\label{sec:algo}
In this section we describe Algorithm \ref{alg:pd} that summarizes the results of Lemma \ref{lem:cutpoint} and Proposition 2.1 of \cite{RR} and provide an implementation  in CoCoA (see \cite{Co}) that is freely downloadable (see \cite{R}). This tool help the research of unmixed binomial edge ideals of deviation greater than or equal to $2$. 


\medskip

\begin{Algorithm}[Computation of $\CC(G)$]\label{alg:pd}\mbox{}
\begin{description}
 \item [Input] A simple connected graph $G$ with $V(G)=[n]$.
 \item [Output] The set $\CC(G)$.
\end{description}
\begin{enumerate}
 \item [1.] $S:=\{1,\ldots,n\}\setminus\{$free vertices of $\Delta(G)\}$ 
 \item [2.] $\CC(G)=\{\emptyset\}$\;
 \item [3.] For each {$T\subset S$ and $1\leq |T|\leq n-2$ with $T=\{v_1,\ldots,v_r\}$} do

\begin{enumerate}
  \item[3.1] If $c(T)>1$ then
  \begin{enumerate}
    \item[] $i:=1$
    \item[3.1.1] While $c(T\setminus\{v_i\})<c(T)$ and $i\leq r$ do 
    \begin{enumerate}
	 \item[] $i:=i+1$
    \end{enumerate}

     \item[3.1.2] If $i>r$ then 
     \begin{enumerate}
	 \item[] $\CC(G):=\CC(G)\cup\{T\}$
     \end{enumerate}

  \end{enumerate}
\end{enumerate}
 \item[4.] Return $\CC(G)$
\end{enumerate}
\end{Algorithm}

%
%
%

We give a description of the Algorithm \ref{alg:pd}. 
\begin{itemize}
 \item Line 1. By Proposition 2.1 of \cite{RR} we can avoid all the free vertices of $\Delta(G)$ in the computation of $\CC(G)$.
 \item Line 2. The empty set is always in $\CC(G)$ by Lemma \ref{lem:cutpoint}.
 \item Line 3. $T$ can be any subset of $S$ by Lemma \ref{lem:cutpoint}. Nevertheless Since $T$ has the cutpoint property, $c(T)\geq 2$ (see line 3.1). Therefore the maximum cardinality of $T$ is $n-2$ where the $2$ connected components are isolated vertices (if such $T$ exists).
 \item Line 3.1. We observe that if $c(T)=1$ then $c(T\setminus \{v_i\})=c(T)$ for all $v_i\in T$. Hence we discard such $T$.
 \item Line 3.1.1-3.1.2. We check if exists a $v_i\in T$ that does not satisfy the condition $c(T\setminus\{v_i\})<c(T)$. If such $v_i$ exists interrupt the current computation  otherwise add the new set to $\CC(G)$ (line 3.1.2). 
\end{itemize}

Thanks to Algorithm \ref{alg:pd} we found some unmixed binomial edge ideals of deviation  $2$ that are not Cohen-Macaulay. We provide two examples. The first one is interesting since it is a bipartite graph. The second one since it has induced $5$-cycle subgraphs.  

 \begin{figure}[hbt]
\begin{center}

\psset{unit=.8cm}

\begin{pspicture}(0,0)(14,5)

\rput(2,2){$\bullet$}
\rput(3,2){$\bullet$}
\rput(4,1){$\bullet$}
\rput(4,2){$\bullet$}
\rput(4,3){$\bullet$}
\rput(5,2){$\bullet$}
\rput(6,2){$\bullet$}

\rput(8,2){$\bullet$}
\rput(9,2){$\bullet$}
\rput(10,1){$\bullet$}
\rput(10,2){$\bullet$}
\rput(10,3){$\bullet$}
\rput(11,1){$\bullet$}
\rput(11,3){$\bullet$}

\rput(12,1){$\bullet$}
\rput(12,3){$\bullet$}

\put(1.5,1.8){$1$}
\put(2.8,1.5){$2$}
\put(3.9,0.5){$5$}
\put(3.6,1.8){$4$}
\put(3.9,3.2){$3$}
\put(5,1.5){$6$}
\put(6.3,1.8){$7$}

\put(7.5,1.8){$1$}
\put(8.8,1.5){$2$}
\put(9.9,0.5){$5$}
\put(10.2,1.8){$4$}
\put(9.9,3.2){$3$}
\put(10.9,3.2){$6$}
\put(10.9,0.5){$7$}
\put(11.9,3.2){$8$}
\put(11.9,0.5){$9$}

\psline(2,2)(3,2)
\psline(5,2)(6,2)

\psline(3,2)(4,3)
\psline(3,2)(4,1)

\psline(4,1)(5,2)

\psline(4,3)(5,2)

\psline(4,3)(4,1)

\psline(8,2)(9,2)
\psline(10,1)(12,1)
\psline(10,3)(12,3)

\psline(9,2)(10,3)
\psline(9,2)(10,1)

\psline(10,3)(10,1)
\psline(11,3)(11,1)


\end{pspicture}
 
\end{center}
\caption{}\label{dev2} 
\end{figure}

\begin{Example}\label{exa:dev2}
Let $J_G$ the binomial edge ideal associated to the graph with $7$ vertices in figure \ref{dev2}. Then
\[
\CC(G)=\{\emptyset,\{2\},\{6\},\{2,6\},\{3,5\},\{2,4,6\}\}
\]
and is unmixed with $\dim S/J_G=8$. Using CoCoA (see \cite{Co}) the $\depth=S/J_G=7$.
\end{Example}

\begin{Example}\label{exa:dev2bis}
Let $J_G$ the binomial edge ideal associated to the graph with $9$ vertices in figure \ref{dev2}. Then
\begin{equation*}
\begin{split}
\CC(G)=& \{\emptyset,\{2\},\{6\},\{7\},\{2,6\},\{2,7\},\{3,5\},\{3,7\},\{5,6\},\{6,7\}, \\
  & \{2,3,7\},\{2,4,6\},\{2,4,7\},\{2,5,6\},\{2,6,7\},\{3,5,6\},\{3,5,7\},\\
  & \{2,4,6,7\}\}
\end{split}
\end{equation*}
and is unmixed with $\dim S/J_G=10$. Using CoCoA (see \cite{Co}) the $\depth=S/J_G=9$.
\end{Example}

\bibliographystyle{plain}

\end{document}